\newtheorem{thm}{Theorem}[section]
\newtheorem{ex}[thm]{Example}
\numberwithin{equation}{section}
\begin{document}

\title{On the Edge-Balanced Index Sets of Complete Even Bipartite Graphs} 

\author{
Ha Dao\thanks{Undergraduate Student, Clayton State University, (\texttt{hdao@student.clayton.edu})}
\and
Hung Hua\thanks{Undergraduate Student, Georgia Institute of Technology, (\texttt{hhua7@gatech.edu})}
\and
Michael Ngo\thanks{Undergraduate Student, Clayton State University, (\texttt{mngo4@student.clayton.edu})}
\and
Christopher Raridan\thanks{Department of Mathematics, Clayton State University, (\texttt{ChristopherRaridan@clayton.edu})}
}
   
\maketitle

% ABSTRACT
\begin{abstract}
In 2009, Kong, Wang, and Lee introduced the problem of finding the edge-balanced index sets ($EBI$) of complete bipartite graphs $K_{m,n}$, where they examined the cases $n=1$, $2$, $3$, $4$, $5$ and the case $m=n$. Since then the problem of finding $EBI(K_{m,n})$, where $m \geq n$, has been completely resolved for the $m,n=\text{odd, odd}$ and odd, even cases. In this paper we find the edge-balanced index sets for complete bipartite graphs where both parts have even cardinality. 
\\[5pt] 
	2000 Mathematics Subject Classification: 05C78, 05C25 
\\[5pt]
  Keywords: Complete bipartite graph, edge-labeling, vertex-labeling, edge-friendly labeling, edge-balanced index set   
\end{abstract}

%%%%%%%%%%%%%%%%%%%%%%%%%%%%%%%%%%%%%%%%%%%%%%%%
\section{Introduction}
Given a graph $G$, let $V$ and $E$ denote the vertex set and edge set of $G$, respectively. A \textit{binary edge-labeling} of $G$ is a function $f : E \to \{0,1\}$. For $i \in \{0,1\}$, we call $e \in E$ an \textit{$i$-edge} if $f(e)=i$. Let $e(i)$ denote the number of $i$-edges under a binary edge-labeling $f$. If $\lvert e(1)-e(0)\rvert \leq 1$, we say that $f$ is \textit{edge-friendly}. Under $f$, we let $\deg_i(v)$ denote the number of $i$-edges incident with $v \in V$. If $f$ is edge-friendly, then $f$ will induce a (possibly partial) \textit{vertex-labeling} where $v$ is labeled $1$ if $\deg_1(v) > \deg_0(v)$, labeled $0$ if $\deg_0(v) > \deg_1(v)$, and unlabeled if $\deg_1(v)=\deg_0(v)$. Any vertex labeled $i$ is called an \textit{$i$-vertex} and let $v(i)$ denote the number of $i$-vertices under an edge-friendly labeling $f$. The \textit{edge-balanced index set} of $G$ is defined as 
\[ EBI(G) = \{\lvert v(1)-v(0) \rvert : \text{$f$ is edge-friendly}\} \]
and an element in $EBI(G)$ will be called a \textit{balanced index}.
More information about graph labelings, including many results concerning edge-friendly labelings, can be found in Gallian's dynamic survey~\cite{GallianYYYY}.

Kong, Wang, and Lee~\cite{KWL2009} explored the problem of finding the edge-balanced index sets of complete bipartite graphs $K_{m,n}$ by investigating the cases where $n=1$, $2$, $3$, $4$, and $5$, as well as the case where $m=n$. In~\cite{KMPR2014}, Krop, Minion, Patel, and Raridan concluded the $EBI$ problem for complete bipartite graphs with both parts of odd cardinality (the ``odd/odd'' case). The following year, Hua and Raridan~\cite{HR2014} found $EBI(K_{m,n})$ where $m > n$, $m$ is odd and $n$ is even (the ``odd/even'' case). In this paper we find the edge-balanced index sets for complete even bipartite graphs (the ``even/even'' case).

%%%%%%%%%%%%%%%%%%%%%%%%%%%%%%%%%%%%%%%%%%%%%%%%
\section{Notations and Conventions}
Throughout the rest of this paper, we let $K_{m,n}$ denote a complete bipartite graph with part $A = \{ v_1, v_2, \dots, v_m \}$ of even cardinality $m$ and part $B = \{ u_1, u_2, \dots, u_n \}$ of even cardinality $n$, where $m \geq n \geq 2$. For any edge-friendly labeling of $K_{m,n}$, we have that $e(0) = e(1) = \frac{mn}{2}$. Without loss of generality, we may assume that labelings are chosen so that $v(1) \geq v(0)$. Let $v_A(i)$ and $v_B(i)$ represent the number of $i$-vertices in parts $A$ and $B$, respectively, and note that $v(i)=v_A(i)+v_B(i)$. 

For integers $a < b$ define $[a,b] = \{a, a+1, \dots, b\}$ and $[a,a] = \{a\}$. If $a$ is positive, $[a] = \{1, 2, \dots, a\}$; otherwise  $[a] = \emptyset$. We organize the edge labels of an edge-friendly labeling of $K_{m,n}$ as an $n \times m$ binary matrix whose $(s,t)$-entry is the label on edge $u_s v_t$, where $s \in [n]$ and $t \in [m]$. The vertex label for $v_t \in A$ or $u_s \in B$ is found by comparing the sum of the entries in column $t$ or row $s$ with $\frac{n}{2}$ or $\frac{m}{2}$, respectively. 

\begin{ex}
\label{ex:K-4-4}
Finding the corresponding balanced index for each of the following edge-friendly labelings of $K_{4,4}$ is straightforward:~(a) and~(b) show two different edge-friendly labelings that each give $0 \in EBI(K_{4,4})$, (c) shows that $1 \in EBI(K_{4,4})$, and (d) shows that $2 \in EBI(K_{4,4})$.  
\begin{figure}[ht]
\centering
\subfigure[]
{
\begin{tabular}{cccc} 
1&1&1&1 \\
1&1&1&1 \\
0&0&0&0 \\
0&0&0&0
\end{tabular}
}
\hspace{0.25in}
\subfigure[]
{
\begin{tabular}{cccc} 
1&1&1&1 \\
1&1&1&0 \\
1&0&0&0 \\
0&0&0&0
\end{tabular}
}
\hspace{0.25in}
\subfigure[]
{
\begin{tabular}{cccc} 
1&0&1&1 \\
1&1&1&0 \\
1&1&0&0 \\
0&0&0&0
\end{tabular}
}
\hspace{0.25in}
\subfigure[]
{
\begin{tabular}{cccc} 
1&1&1&0 \\
1&1&1&0 \\
1&1&0&0 \\
0&0&0&0
\end{tabular}
}
\caption{Some edge-friendly labelings of $K_{4,4}$.}
\end{figure}
\end{ex}

The quotient and remainder when $x$ is divided by $y$ using the division algorithm is denoted by $x~\text{div}~y$ (or, $\lfloor \frac{x}{y} \rfloor$) and $x \bmod y$, respectively.

%%%%%%%%%%%%%%%%%%%%%%%%%%%%%%%%%%%%%%%%%%%%%%%%%%%%%%%
\section{Finding $EBI(K_{m,n})$}
In this section, we prove
\begin{thm}
\label{thm:main}
Let $K_{m,n}$ be a complete bipartite graph with parts of cardinality $m$ and $n$, where $m \geq n$ are positive even integers. Then $EBI(K_{m,2}) = \{ 0 \}$. For $n \geq 4$, let $k = \left\lfloor \frac{mn}{n+2} \right\rfloor$, $k' = \frac{mn}{2} \bmod \left( \frac{n}{2}+1 \right)$, $j = \left\lfloor \frac{mn}{m+2} \right\rfloor$, and $j' = \frac{mn}{2} \bmod \left( \frac{m}{2}+1 \right)$. Then  
\[
EBI(K_{m,n}) =
\begin{cases}
\{ 0,1, \dots, 2(k+j)+2-m-n \}, &\text{if~$k' = \frac{n}{2}$ and $j' = \frac{m}{2}$}, \\
\{ 0,1, \dots, 2(k+j)+1-m-n \}, &\text{if either~$k' = \frac{n}{2}$ or $j' = \frac{m}{2}$}, \\
\{ 0,1, \dots, 2(k+j)-m-n \}, &\text{if~$k' < \frac{n}{2}$ and $j' < \frac{m}{2}$}.
\end{cases}
\]
\end{thm}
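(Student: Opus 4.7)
I would split the argument into three parts: a direct treatment of the base case $n=2$, an upper bound on $|v(1)-v(0)|$ via side-wise double counting for $n \geq 4$, and an achievability argument combining an extremal construction with a local-swap descent. For $n=2$, each $v \in A$ has degree $2$, so writing $a_1=v_A(1)$, $a_0=v_A(0)$, $a_u=m-a_1-a_0$, the count of $1$-edges from the $A$-side gives $2a_1+a_u=m$; combining with $a_1+a_0+a_u=m$ yields $a_1 = a_0$. Similarly, the two rows of the edge-label matrix have complementary column sums totaling $m$ in each column, so a short case check gives $v_B(1) = v_B(0)$, and hence $EBI(K_{m,2})=\{0\}$.

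For $n\geq 4$, I bound $a_1-a_0$ and $b_1-b_0$ (where $b_i=v_B(i)$) separately. Double counting $1$-edges from $A$ yields
\[
\tfrac{mn}{2} = \sum_{v\in A}\deg_1(v) \geq \bigl(\tfrac{n}{2}+1\bigr)a_1 + \tfrac{n}{2}\bigl(m-a_1-a_0\bigr),
\]
which rearranges to $a_0 \geq \lceil 2a_1/n \rceil$. Combined with $a_1 \leq k$ (obtained from $a_1(n/2+1) \leq mn/2$) and the arithmetic identity $\lceil 2k/n \rceil = m-k$ when $k' < n/2$ and $\lceil 2k/n \rceil = m-k-1$ when $k'=n/2$ (readable off $mn/2 = k(n/2+1) + k'$), this gives $a_1 - a_0 \leq 2k-m$ when $k' < n/2$ and $a_1-a_0 \leq 2k+1-m$ when $k'=n/2$. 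The symmetric argument on $B$ produces the analogous bound for $b_1 - b_0$ in terms of $j, j'$, and summing the two bounds reproduces the upper bound $M$ claimed in each of the three cases.

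For achievability I would construct an explicit $n\times m$ 0-1 matrix realizing $M$ and then descend to every smaller value via local swaps. In the rigid case $k'=n/2$, $j'=m/2$, equality in the above double count forces every 1-column to have sum exactly $n/2+1$, one column to have sum $n/2$, and the remaining $m-k-1$ columns to have sum $0$, with an analogous row pattern; all $1$-edges therefore lie in a $(j+1)\times(k+1)$ submatrix with rigid marginals, whose existence is guaranteed by Gale--Ryser (the hypotheses $k \geq m/2$ and $j \geq n/2$ both follow from $n \geq 2$), and an explicit staircase fill realizes one. The other two cases admit analogous constructions that absorb the slack $k'$ and/or $j'$ into a designated 0-column or 0-row. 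To realize any index $I<M$, I swap the labels of two edges $uv$ and $uv'$ sharing a common endpoint $u \in B$, with $v$ a 1-column of sum $n/2+1$, $v'$ a 0-column of sum $\leq n/2-2$, $f(uv)=1$, and $f(uv')=0$; such a row $u$ exists by inclusion--exclusion on the $n/2+1$ rows where $v$ has a $1$ and the $\geq n-(n/2-2)$ rows where $v'$ has a $0$. This swap preserves every row sum, turns $v$ into an unlabeled column, and leaves $v'$ a 0-column, so the balanced index drops by exactly one. Iterating, and invoking symmetric $B$-swaps once the $A$-side 1-columns are exhausted, realizes every index in $\{0,1,\dots,M\}$.

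The main obstacle will be the extremal construction in the rigid case $k'=n/2$, $j'=m/2$, where the row and column marginals are simultaneously tight and must be realized in the same 0-1 matrix; either a careful Gale--Ryser verification or a clean explicit staircase construction valid uniformly across all parameter regimes is needed. A secondary subtlety is verifying that the local swap in the descent remains applicable throughout, which may require some case analysis (or a switch to $B$-swaps) near the end of the descent, when the supply of usable low-sum 0-columns is depleted.
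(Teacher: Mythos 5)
Your plan is sound but runs in the opposite direction from the paper, and its upper bound is actually the more rigorous of the two. The paper disposes of $n=2$ by citing Kong--Wang--Lee, argues the maximum informally (it computes $v_A(0)$ under the assumption that $v_A(1)$ is already at its maximum $k$, rather than bounding $v_A(1)-v_A(0)$ over all edge-friendly labelings), and then realizes every index in $\{0,\dots,\max EBI\}$ by \emph{ascending}: it starts from the index-$0$ labeling whose top $\tfrac{n}{2}$ rows are all $1$s and performs an explicit four-step sequence of single edge-label switches, each raising the index by at most one, terminating at the maximum. You instead prove $n=2$ directly, get the upper bound from the double count $a_0 \geq \lceil 2a_1/n\rceil$ together with $a_1\leq k$ (you should add the one-line observation that $a_1 \mapsto a_1 - \lceil 2a_1/n\rceil$ is nondecreasing, so the bound is attained at $a_1=k$; with that, your argument closes a genuine looseness in the paper's), and then \emph{descend} from an extremal matrix. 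The paper's ascent buys freedom from both of your flagged obstacles: the extremal configuration is built switch by switch, so no Gale--Ryser realization is needed, and there is no ``supply'' question. Your descent buys a cleaner logical structure but at a real cost: the secondary subtlety you flag is not cosmetic. For $n=4$ each all-zero column can absorb exactly one displaced $1$ before its sum reaches $\tfrac{n}{2}-1$, and a short computation (using $3k = 2m - k'$) shows the total $A$-side absorption capacity exceeds the required number of $A$-side decrements by only $k' - [\,k'=\tfrac{n}{2}\,] \geq 0$, i.e., by zero in some cases; the hand-off to the $B$-side swaps, whose capacity is likewise exactly $2$ there, must then be timed precisely. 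So the descent does go through, but that capacity count is a necessary piece of the proof, not a routine check to be deferred.
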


\begin{proof}
In~\cite{KWL2009}, the authors show that $EBI(K_{m,2}) = \{ 0 \}$ for all integers $m \geq 2$. Throughout the rest of this paper, we assume that $m \geq n \geq 4$ are both even.

To find the maximal element of $EBI(K_{m,n})$ we need an edge-friendly labeling that maximizes the value of $v(1)$ while at the same time minimizes the value of $v(0)$. Let $k$ and $j$ represent the maximum value of $v_A(1)$ and $v_B(1)$, respectively. A $1$-vertex $v \in A$ must have $\deg_1(v) \geq \frac{n}{2}+1$, so the maximum value of $v_A(1)$ is $k = e(1)~\text{div}~\left( \frac{n}{2}+1 \right) = \left\lfloor \frac{mn}{n+2} \right\rfloor$. Given any edge-friendly labeling that maximizes $v_A(1)$ where each of the $1$-vertices in $A$ has exactly $\left( \frac{n}{2}+1 \right)$ $1$-edges, the number of $1$-edges incident with the other $(m-k)$ vertices in $A$ is $k' = e(1) \bmod \left( \frac{n}{2}+1 \right)$. If $v_A(1)$ is maximized and $k' < \frac{n}{2}$, there are not enough of these extra $1$-edges to create an unlabeled vertex in $A$, so $v_A(0) = m-k$. If instead we have that $k' = \frac{n}{2}$, there are enough extra $1$-edges to allow an unlabeled vertex in part $A$ and having an unlabeled vertex in $A$ reduces the value of $v_A(0)$, which would be $m-k-1$ in this case. Similarly, the maximum value of $v_B(1)$ is $j = e(1)~\text{div}~\left( \frac{m}{2}+1 \right) = \left\lfloor \frac{mn}{m+2} \right\rfloor$. For any edge-friendly labeling that maximizes $v_B(1)$ where each of the $1$-vertices in $B$ has exactly $\left( \frac{m}{2}+1 \right)$ $1$-edges, the number of $1$-edges incident with the other $(n-j)$ vertices in $B$ is $j' = e(1) \bmod \left(\frac{m}{2}+1\right)$. When $v_B(1)$ is maximized, if $j' < \frac{m}{2}$, then $v_B(0) = n-j$, and if $j' = \frac{m}{2}$, then $v_B(0) = n-j-1$. 

Now, we need to find an edge-friendly labeling that maximizes both $v_A(1)$ and $v_B(1)$ at the same time, thus maximizing their sum $v(1)$. Maximizing $v(1)$ and allowing part $A$ or part $B$ to contain an unlabeled vertex (when $k' = \frac{n}{2}$ or $j' = \frac{m}{2}$) minimizes both $v_A(0)$ and $v_B(0)$ at the same time, thus minimizing their sum $v(0)$. That is,
\[ 
\max EBI(K_{m,n}) =
\begin{cases}
2(k+j)+2-m-n, &\text{if~$k' = \frac{n}{2}$ and $j' = \frac{m}{2}$}, \\
2(k+j)+1-m-n, &\text{if either~$k' = \frac{n}{2}$ or $j' = \frac{m}{2}$}, \\
2(k+j)-m-n, &\text{if~$k' < \frac{n}{2}$ and $j' < \frac{m}{2}$}.
\end{cases}
\]

For example, if $m=n=4$, then $k = j = k' = j' = \frac{m}{2} = \frac{n}{2} = 2$ and $\max EBI(K_{4,4}) = 2$. Example~\ref{ex:K-4-4}(d) shows an edge-friendly labeling for $K_{4,4}$ that produces the maximal balanced index for this graph. For all other values of even integers $m \geq n$, it follows that $k > \frac{m}{2}$ and $j > \frac{n}{2}$, which ensures that in each of the three cases above, $\max EBI(K_{m,n})$ is a positive quantity.

We now discuss an algorithm that provides a sequence of edge-friendly labelings (actually, a sequence of edge-label switches) that correspond to each of the balanced indices from $0$ to $\max EBI(K_{m,n})$. For each of the following steps, we mention only the vertices whose labels have changed due to the switches described in that step. For some values of $m$ and $n$, running the entire algorithm is unnecessary; indeed, the procedure should be terminated when $\max EBI(K_{m,n})$ has been obtained. We will provide a few example graphs when early termination is allowed.  

\textbf{Step~0.} For $s \in \left[ \frac{n}{2} \right]$ and $t \in \left[ m \right]$, create an $n \times m$ matrix whose $(s,t)$-entry is $1$ and set all other entries to $0$. The top half of this matrix is all 1s and the bottom half is all 0s, so $v_A(1) = v_A(0) = 0$, $v_B(1) = v_B(0) = \frac{n}{2}$, and $0 \in EBI(K_{m,n})$.

In Steps~1-3, we let $q(a)$ and $r(a)$ represent the quotient and remainder, respectively, when $a-1$ is divided by $\frac{n}{2}$ using the division algorithm. 

\textbf{Step~1.} For $a \in \left[ \frac{m}{2}-1 \right]$, switch the $\left( \frac{n}{2}+1, a \right)$-entry with the $\left( \frac{n}{2} - r(a), m - q(a) \right)$-entry. Here, each edge-label switch exchanges a $0$ in row $\frac{n}{2}+1$ with a $1$ in the last $q\left( \frac{m}{2}-1 \right)+1$ columns and above the $\left( \frac{n}{2}+1 \right)$-st row. Such switches cause $v_a$ to become a $1$-vertex for $a \in \left[ \frac{m}{2}-1 \right]$ and $v_{m+1-b}$ to become a $0$-vertex for $b \in \left[ q\left( \frac{m}{2}-1 \right)+1 \right]$. Note that the first switch of a $1$ in each column has no effect on the balanced index since both $v_A(1)$ and $v_A(0)$ increase by $1$, but that each subsequent switch will increase the balanced index by $1$. At the end of Step~1, $\deg_0(u_{\frac{n}{2}+1}) = \frac{m}{2}+1$, which means $u_{\frac{n}{2}+1}$ is just ``barely'' a $0$-vertex.

\textbf{Step~2.} Switch the $\left( \frac{n}{2}+1, \frac{m}{2} \right)$-entry with the $\left( \frac{n}{2}-r\left( \frac{m}{2} \right), \frac{m}{2} \right)$-entry. This switch causes vertex $u_{\frac{n}{2}+1}$ to become an unlabeled vertex so the balanced index increases by $1$. Now, switch the $\left( \frac{n}{2}-r\left( \frac{m}{2} \right), \frac{m}{2} \right)$-entry with the $\left( \frac{n}{2}-r\left( \frac{m}{2} \right), m-q\left( \frac{m}{2} \right) \right)$-entry, which causes $v_{\frac{m}{2}}$ to become a $1$-vertex. If $r\left( \frac{m}{2} \right) = 0$, or equivalently $m=tn+2$ for some integer $t \geq 1$, then this switch also causes $v_{m-q\left( \frac{m}{2} \right)}$ to become a $0$-vertex and there is no change in the balanced index; otherwise, $v_{m-q\left( \frac{m}{2} \right)}$ was already a $0$-vertex and the balanced index increases by $1$. Note that for $m=n=4$, we terminate the procedure since $k = \frac{m}{2}$ and $j = \frac{n}{2}$ for this case. 

For other values of $m$ and $n$, the $\left( \frac{m}{2}+1 \right)$-st (double) switch is similar to the $\frac{m}{2}$-th. Exchange the $\left( \frac{n}{2}+1, \frac{m}{2}+1 \right)$-entry with the $\left( \frac{n}{2}-r\left( \frac{m}{2}+1 \right), \frac{m}{2}+1 \right)$-entry. This switch causes $u_{\frac{n}{2}+1}$ to become a $1$-vertex so the balanced index increases by $1$. Now, switch the $\left( \frac{n}{2}-r\left( \frac{m}{2}+1 \right), \frac{m}{2}+1 \right)$-entry with the $\left( \frac{n}{2}-r\left( \frac{m}{2}+1 \right), m-q\left( \frac{m}{2}+1 \right) \right)$-entry, which causes $v_{\frac{m}{2}+1}$ to become a $1$-vertex. If $r\left( \frac{m}{2}+1 \right) = 0$, or equivalently $m=tn$ for some integer $t \geq 1$, then this switch also causes $v_{m-q\left( \frac{m}{2}+1 \right)}$ to become a $0$-vertex and there is no change in the balanced index. Otherwise, $v_{m-q\left( \frac{m}{2}+1 \right)}$ was already a $0$-vertex and the balanced index increases by $1$. Note that if $(m,n) = (6,4)$, $(8,4)$, $(10,4)$, or $(6,6)$, then we terminate the procedure since $v_A(1) = k = \frac{m}{2}+1$, $v_B(1) = j = \frac{n}{2}+1$, and $j' < \frac{m}{2}$ since maximizing $v_B(1)$ does not allow for an unlabeled vertex in part $B$ for these cases.

\textbf{Step~3.} Perform this step if and only if $k > \frac{m}{2}+1$. For $a \in \left[ \frac{m}{2}+2,k \right]$, switch the $\left( \frac{n}{2}+1, a \right)$-entry with the $\left( \frac{n}{2} - r(a), m - q(a) \right)$-entry. This step is essentially the same as Step~1, just applied to a different set of indices. Moreover, we have now forced $v_A(1) = k$ and either $v_A(0) = m-k$ (there are no unlabeled vertices in part $A$) or $v_A(0) = m-k-1$ (there is one unlabeled vertex in $A$). That is, we have maximized $v_A(1)$ and minimized $v_A(0)$. Additionally, $v_B(1) = \frac{n}{2}+1$ and $v_B(0)=n-v_B(1)$, so if $j = \frac{n}{2}+1$ and $j' < \frac{m}{2}$, then terminate the procedure.

\textbf{Step 4.} Perform this step if and only if $j > \frac{n}{2}+1$ or $j' = \frac{m}{2}$. In this step, we only make switches with entries that are in the same column, thereby preserving the current vertex labels for all of the vertices in part $A$. Since Steps~1-3 exchange $k$ $1$s in the top half of the matrix for $0$s from row $\frac{n}{2}+1$, we have $\deg_1(u_s) = k+1 > \frac{m}{2}+2$ for $s \in \left[ c \right]$, where $c = \frac{n}{2} - k \bmod \frac{n}{2}$ is a positive integer. For these $c$ $1$-vertices in part $B$, we may replace up to $d = (k+1) - \left( \frac{m}{2}+1 \right) = k - \frac{m}{2} > 0$ of their incident $1$-edges with $0$-edges and the vertices will remain $1$-vertices. Now, $\deg_1(u_s) = k > \frac{m}{2}+1$ for $s \in \left[ \frac{n}{2} +1 - k \bmod \frac{n}{2}, \frac{n}{2}+1 \right]$, so for these $\left( 1 + k \bmod \frac{n}{2} \right)$ $1$-vertices in $B$, we may replace up to $d - 1 > 0$ of their incident $1$-edges with $0$-edges and the vertices will remain $1$-vertices. Moreover, $\deg_1(u_s) = 0$ for $s \in \left[ \frac{n}{2}+2,n \right]$.

Let $b$ be the total number of switches that we need to perform to obtain the maximal balanced index. If $j' = \frac{m}{2}$, then part $B$ should contain an unlabeled vertex and $b = \frac{mn}{2} - \left( \frac{m}{2}+1 \right) \left( \frac{n}{2}+1 \right)$; otherwise, $b = \Big[ j - \left( \frac{n}{2}+1 \right) \Big] \left( \frac{m}{2}+1 \right)$. For $a \in [cd]$, switch the 
\[ \left( 1 + (a-1)~\text{div}~d, 1 + (a-1) \bmod \left( \frac{m}{2}+1 \right) \right)\text{-entry} \]
with the 
\[ \left( \frac{n}{2}+2 + (a-1)~\text{div}~\left( \frac{m}{2}+1 \right), 1 + (a-1) \bmod \left( \frac{m}{2}+1 \right) \right)\text{-entry}. \]
This collection of switches takes the extra $d$ $1$s on row $s$, where $s \in [c]$, and exchanges them for $0$s that are in the same column but on a different row (and in the bottom half of the matrix). The row that is losing $0$s and gaining $1$s will continue to do so until the number of $1$s in that row is $\frac{m}{2}+1$, at which time the procedure simply moves down to the next row and ``starts over'' (due to the mod operator). When the number of $1$s in a row reaches $\frac{m}{2}$, the corresponding vertex changes from a $0$-vertex to an unlabeled vertex and the balanced index increases by $1$. Similarly, when the number of $1$s in a row reaches $\frac{m}{2}+1$, the unlabeled vertex becomes a $1$-vertex and the balanced index increases by $1$ again. 

Continuing, for $a \in [cd+1,b]$, switch the 
\[ \left( 1 + c + (a-1-cd)~\text{div}~(d-1), 1 + (a-1) \bmod \left( \frac{m}{2}+1 \right) \right)\text{-entry} \]
with the 
\[ \left( \frac{n}{2}+2 + (a-1)~\text{div}~\left( \frac{m}{2}+1 \right), 1 + (a-1) \bmod \left( \frac{m}{2}+1 \right) \right)\text{-entry}. \]
This collection of switches is similar to those just completed, except we are taking only $d-1$ extra $1$s on a row and exchanging them with $0$s in the same column but on a different row. The balanced index changes as before, as well. 

We started with balanced index $0$ and every switch described by the algorithm increased the balanced index by at most $1$. Although not all graphs required that every step of the algorithm completed, any early termination of the procedure was due to having already obtained $\max EBI(K_{m,n})$. At the end of Step~3, we remarked that $v_A(1) = k$ and either $v_A(0) = m-k-1$ or $m-k$, according to whether part $A$ did or did not contain an unlabeled vertex. Step~4 does not alter the labels of vertices in part $A$. Upon completion of Step~4, we find that $v_B(1) = j$ and either $v_B(0) = n-j$ (there are no unlabeled vertices in part $B$) or $v_B(0) = n-j-1$ (there is one unlabeled vertex in $B$). Thus, the construction provided by the algorithm produces the maximal balanced index for $K_{m,n}$, where $m \geq n \geq 2$ are even integers. \phantom{} \hfill $\blacksquare$
\end{proof}

The first author has written a MATLAB m-file that shows the output of each edge-label switch described in Steps~1-4. The file, called \texttt{EBI\_K.m}, can be found at \texttt{http://www.clayton.edu/faculty/craridan/code}.


\begin{thebibliography}{9}

\bibitem{GallianYYYY}
Joseph~A. Gallian.
\newblock A dynamic survey of graph labeling.
\newblock {\em Electron. J. Combin.}, 5:DS 6,
\texttt{http://www.combinatorics.org/ojs/index.php/eljc/index}, 2015.

\bibitem{HR2014}
Hung Hua and Christopher Raridan.
\newblock On the edge-balanced index sets of odd/even complete bipartite graphs.
\newblock {\em Cong. Numer.}, volume~219, pages 227--232, 2014.

\bibitem{KL1995}
Man~C. Kong and Sin-Min Lee.
\newblock On edge-balanced graphs.
\newblock In {\em Graph {T}heory, {C}ombinatorics, and {A}lgorithms, {V}ol.\ 1,2 ({K}alamazoo, {MI}, 1992)}, Wiley-Intersci. Publ., pages 711--722. Wiley, New York, 1995.

\bibitem{KWL2009}
Man~C. Kong, Yung-Chin Wang, and Sin-Min Lee.
\newblock On edge-balanced index sets of some complete {$k$}-partite graphs.
\newblock In {\em Proceedings of the {F}ortieth {S}outheastern {I}nternational {C}onference on {C}ombinatorics, {G}raph {T}heory and {C}omputing}, volume~196, pages 71--94, 2009.

\bibitem{KMPR2014}
Elliot Krop, Sarah Minion, Pritul Patel, and Christopher Raridan.
\newblock A solution to the edge-balanced index set problem for complete odd bipartite graphs.
\newblock {\em Bull. Inst. Combin. Appl.}, volume~70 (2014), 119--125.
\end{thebibliography}
\end{document}